\documentclass[a4]{amsart}
\usepackage{hyperref}
\usepackage{cleveref}
\usepackage[margin=3.5cm]{geometry}
\usepackage{xcolor}

\makeatletter
\@namedef{subjclassname@2020}{\textup{2020} Mathematics Subject Classification}
\makeatother

\allowdisplaybreaks

\theoremstyle{cupthm}
\newtheorem{thm}{Theorem}[section]
\newtheorem{prop}[thm]{Proposition}
\newtheorem{cor}[thm]{Corollary}
\newtheorem{lemma}[thm]{Lemma}
\theoremstyle{cupdefn}

\theoremstyle{cuprem}
\newtheorem{rem}[thm]{Remark}

\numberwithin{equation}{section}

\newcommand{\ot}{\otimes}
\newcommand{\obp}{\otimes^{\gamma}}

\newcommand{\N}{\mathbb N}

\newcommand{\C}{\mathbb C}
\newcommand{\rar}{\rightarrow}
\newcommand{\mH}{\mathcal{H}}
\newcommand{\mK}{\mathcal{K}}
\newcommand{\mU}{\mathcal{U}}

\def\la{\langle}
\def\ra{\rangle}

\begin{document}
\title[On SAI of projective tensor product of Hilbert-Schmidt
  space]{On strong Arens irregularity of projective tensor product of
  Hilbert-Schmidt space}

\author[V P Gupta]{Ved Prakash Gupta}

\author[L K Singh]{Lav Kumar Singh*}

\address{School of Physical Sciences, Jawaharlal Nehru University, New Delhi}
\email{vedgupta@mail.jnu.ac.in, ved.math@gmail.com}
\email{lavksingh@hotmail.com, lav17\_sps@jnu.ac.in} 

\subjclass[2020]{46M05,  46B28, 47B10}

\keywords{Banach algebras, Arens regularity, strong Arens
  irregularity, annihilator, topological centers, Schatten class
  operators, projective tensor product}

\date{}

\thanks{The second named author was  supported
  by the Council of Scientific and Industrial Research (Government
  of India) through a Senior Research Fellowship with
  No. \bf 09/263(1133)/2017-EMR-I}

\begin{abstract}
It was shown in \cite{Lav} that the Banach algebra $A:=S_2(\ell^2)\obp
S_2(\ell^2)$ is not Arens regular, where $S_2(\ell^2)$ denotes the
Banach algebra of the Hilbert-Schmidt operators on $\ell^2$. In this
article, employing the notion of limits along ultrafilters, we prove
that the irregularity of $S_2(\ell^2)\obp S_2(\ell^2)$ is not
strong. Along the way, we provide a class of functionals in $A^{**}$
which lie in the topological center but are not in $A$; and, as a
consequence, we deduce that $A^{**}$ is not an annihilator Banach
algebra with respect to any of the two Arens products.
\end{abstract}

\maketitle

\section{Introduction}
 For any Banach algebra $A$, Richard Arens (in \cite{Arens}) defined
two products $\Box$ and $\diamond$ on its bidual space $A^{**}$ such
that each product makes $A^{**}$ into a Banach algebra and the
canonical isometric inclusion $J:A\to A^{**}$ becomes a homomorphism
with respect to both the products. A Banach algebra $A$ is said to be
\textit{Arens regular} if the two products $\Box$ and $\diamond$ agree
on $A^{**}$, i.e. $f\Box g=f\diamond g$ for all $f,g\in A^{**}$. Soon,
people realized that when a Banach algebra is not Arens regular, then,
roughly speaking, it can exhibit different ``levels'' of
irregularity. Two such well explored notions are known as {\em strong
  Arens irregularity} (SAI) due to Dales and Lau (see \cite[Definition
  2.18]{DL}) and {\em extreme non-Arens regularity } (ENAR) due to
Granirer. In this article, we shall concentrate only on strong Arens
irregularity.

Briefly speaking, a Banach algebra $A$ is said to be {\em left}
(resp., {\em right}) {\em strongly Arens irregular} if its so called
{\em left topological center} $Z_t^{(l)} (A^{**})$ (resp., {\em right
  topological center} $Z_t^{(r)} (A^{**})$) equals $A$. And, $A$ is
said to be {\em strongly Arens irregular} if it is both left and right
strongly Arens irregular.  Interestingly, on the other extreme, it is
known that $A$ is Arens regular if and only if $Z_t^{(l)}
(A^{**}) = A^{**} = Z_t^{(r)} (A^{**})$ - see \cite[Page 1]{HN}.  It
thus follows that a Banach algebra is Arens regular as well as
strongly Arens irregular if and only if it is reflexive. In
particular, the Banach algebra $S_2(\mH)$ consisting of Hilbert-Schmidt
operators on a Hilbert space $\mH$ is Arens regular as well as
strongly Arens irregular.  However, in general, it is a difficult task
to realize whether an Arens irregular Banach algebra is strongly Arens
irregular or not. Yet, over the years, many known (non-reflexive)
Banach algebras have been identified which are strongly Arens
irregular and the quest is still on. For instance:
\begin{itemize}
  \item For a locally compact group $G$, it was proved by Lau and
    Losert \cite{Lau-losert} that $L^1(G)$ is always strongly Arens
    irregular.

\item The Fourier algebra $A(G)$ is strongly Arens irregular if $G$ is
  discrete and amenable - see \cite{Lau-Losert2}.
\item $A(\mathbb F_2)$ and $A(SU(3))$ are not strongly Arens
  irregular - see \cite{Losert}.
 
\end{itemize}

Our interest in the subject grew from the natural question of
analyzing the Arens regularity of the projective tensor product of
some standard Banach algebras. This project was, in fact, initiated in
the 80s by \"Ulger (see \cite{Ulger}) and over the years it has attracted
some good minds and has proved to be quite significant with a good
number of quality work already devoted to it - see, for instance,
\cite{Ulger, GI, Neufang, KR} and the references therein. One highlight
of this project has been a recent work of Neufang \cite{Neufang},
wherein he settled a four-decade-old open problem by proving that the
Varopoulos algebra $C(X) \obp C(Y)$, for (infinite) compact Hausdorff
spaces $X$ and $Y$, is neither Arens regular nor strongly Arens
irregular. In fact, Neufang goes on to prove that for arbitrary
$C^*$-algebras $A$ and $B$, their projective tensor product $A \obp B$
is Arens regular if and only if either $A$ or $B$ has the Phillips
property, thereby illustrating a deep relationship that exists between
Arens regularity of the projective tensor product and some intrinsic
geometric properties of the constituent Banach algebras.

An important class of Banach algebras consists of the Schatten
$p$-class operators, $S_p(\mH)$, on a Hilbert space $\mH$. They are
known to be Arens regular for all $1 \leq p < \infty$ (with respect to
multiplication as operator composition). Like the Varopoulos algebras,
a natural question that arises is to analyze the Arens regularity of
their projective tensor products $S_p(\mH) \obp S_q(\mH)$ for $1 \leq
p, q < \infty$. In \cite{Lav}, the second named author had shown that
the Banach algebras $S_p(\ell^2) \obp S_q(\ell^2)$ for $1 \leq p, q
\leq 2$ are not Arens regular. However, the question whether
$S_p(\ell^2) \obp S_q(\ell^2)$ is strongly Arens irregular or not
remained unanswered. Continuing the work initiated in \cite{Lav}, we
now answer this question in the negative for $p = q=2$. Thus,
analogous to the Varopoulos algebra, we now know that the Banach
algebra $S_2(\ell^2) \obp S_2(\ell^2)$ is neither Arens regular nor
strongly Arens irregular. Further, via some natural identifications,
essentially on similar lines, we observe that the predual
$S_1(\ell^2)$ of $B(\ell^2)$ inherits a Banach algebra structure which
is neither Arens regular nor strongly Arens irregular. (It must be
mentioned here that such structures on $S_1(\ell^2)$ are neither new
nor unique, as has been illustrated, for instance, in \Cref{predual}.)

Here is a brief overview of the flow of this article. After a quick
section on preliminaries, we first focus in Section $3$ on
establishing that the Banach algebra $A:=S_2(\ell^2)\obp S_2(\ell^2)$
is not strongly Arens irregular. The novelty of this paper lies in a
judicious exploitation of the notion of a limit along a non-principal
ultrafilter, which allows us to provide a class of functionals in
$A^{**}$ which lie in the topological center but are not in $A$;
thereby establishing that $A$ is not strongly Arens irregular. Also,
as another consequence of the technique of limit along a non-principal
ultrafilter, we deduce that the bidual space $A^{**}$ is not an
annihilator Banach algebra with respect to any of the two Arens
products.

\section{Preliminaries}
\subsection{Arens regularity and strong Arens irregularity}
        For the sake of convenience, we quickly recall the definitions
        of the two products $\Box$ and $\diamond$ mentioned in the
        Introduction.  Let $A$ be a Banach  algebra. For $a \in A$,
        $\omega\in A^*$, $f\in A^{**}$, consider the functionals
        $\omega_a, {}_{a}\omega\in A^*$, $\omega_f,{}_f\omega\in
        A^{**}$ given by $w_a=(L_a)^*(\omega)$,
        ${}_a\omega=(R_a)^*(\omega)$; $\omega_f(a)=f({}_a\omega)$ and
        ${}_f\omega(a)=f(\omega_a)$. Then, for $f,g\in A^{**}$ the
        operations $\Box$ and $\diamond$ are given by $(f\Box g)
        (\omega)= f({}_g\omega)$ and $(f\diamond
        g)(\omega)=g(\omega_f)$ for all $\omega \in A^*$. As recalled
        in the Introduction, $A$ is said to be Arens regular if the
        products $\Box$ and $ \diamond$ are same on $A^{**}$.

        Further, the left and the right topological centers of $A$ are defined,
respectively, as
\[
\begin{array}{rcl}
Z_t^{(l)} (A^{**}) & = & \{\varphi \in A^{**}: \phi\Box
\psi=\phi\diamond\psi \text{ for all } \psi\in A^{**} \}; \text{ and }
\\ Z_t^{(r)} (A^{**}) & = & \{\varphi \in A^{**}:
\psi\Box\phi=\psi\diamond \phi \text{ for all } \psi\in A^{**} \}.
\end{array}
\]
It (is known and) can be seen easily that $A \subseteq Z_t^{(l)}
(A^{**}) \cap Z_t^{(r)} (A^{**})$.

A Banach algebra $A$ is said to be {\em left} (resp., {\em right})
{\em strongly Arens irregular} if $Z_t^{(l)} (A^{**}) = A$ (resp.,
$Z_t^{(r)} (A^{**}) = A$). And, $A$ is said to be {\em strongly Arens
  irregular} if it is both left and right strongly Arens irregular. As
mentioned earlier, $A$ is Arens regular if and only if $Z_t^{(l)}
(A^{**}) = A^{**} = Z_t^{(r)} (A^{**})$. As a consequence of
Goldstine's Theorem, it is easy to see that any norm dense subset of
$A$ is weak*-dense in both $Z_t^{(r)}(A^{**})$ and
$Z_t^{(l)}(A^{**})$.  We refer the reader to \cite{DL} for further
discussion on topological centers.

\subsection{von Neumann Schatten classes}
For Hilbert spaces $\mathcal H_1$ and $\mathcal H_2$ and $1\leq
p<\infty$, the $p^{\mathrm{th}}$-Schatten class $S_p(\mathcal H_1,\mathcal
H_2)$ denotes the collection
\begin{equation}
  S_p(\mathcal H_1,\mathcal
  H_2):=\{T\in B(\mathcal H_1,\mathcal H_2)\, :\, 
  ||T||_p<\infty\},
\end{equation} where
  $||T||_p:=(\operatorname{Tr}(|T|^p))^{1/p}$ and $\text{Tr}$ denotes
the canonical semi-finite positive trace on $B(\mathcal{H}_1)$. It is
well known that $S_p(\mathcal H_1,\mathcal H_2)$ is a subspace of
$B_0(\mathcal H_1,\mathcal H_2)$, the space of compact operators from
$\mathcal{H}_1$ into $\mathcal{H}_2$, and is a Banach space with
respect to the Schatten $p$-norm $||\cdot||_p$.  Further, the finite rank operators are
dense in $\left(S_p(\mathcal H_1,\mathcal H_2), ||\cdot||_p\right)$.

 The following useful inequality is well known - see, for instance,
\cite[$\S XI.9.7$, Lemma 9(d)]{Dun-Schwartz}.

\begin{prop}\label{Th:2.1}
	Let $\mathcal H_1,\mathcal H_2$ and $\mathcal H_3$ be Hilbert
        spaces. If $R\in B(\mathcal H_2)$, $S\in S_p(\mathcal
        H_1,\mathcal H_2)$ and $T\in B( \mathcal H_1)$,
        then
        \[
        \|RST\|_p\leq \|R\| \|S\|_p \|T\|.
        \]
\end{prop}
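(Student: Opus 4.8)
The plan is to reduce this Schatten-norm inequality to a pointwise inequality between singular values and then to sum. Write $s_n(K)$ for the $n$-th singular value of a compact operator $K$ between Hilbert spaces, arranged in non-increasing order and repeated according to multiplicity, so that $\|K\|_p = \big(\sum_n s_n(K)^p\big)^{1/p}$. Since $S$ is compact and the compact operators form a two-sided ideal in the bounded operators, $RST\colon \mH_1 \to \mH_2$ is again compact, so the singular-value description of the $p$-norm is available for it as well. (Note that only $\mH_1$ and $\mH_2$ actually enter the statement.)

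First I would record the approximation-number characterization of the singular values: for a compact operator $K$,
\[
s_n(K) = \inf\{\|K - F\| : \operatorname{rank}(F) < n\}, \qquad n \in \N.
\]
The central step is then to establish the pointwise bound
\[
s_n(RST) \le \|R\|\, s_n(S)\, \|T\| \qquad \text{for every } n \in \N.
\]
To see this, fix $n$ and let $F$ be any operator with $\operatorname{rank}(F) < n$. Then $RFT$ has $\operatorname{rank}(RFT) < n$, and by submultiplicativity of the operator norm together with \Cref{Th:2.1}'s underlying estimate for the operator norm,
\[
s_n(RST) \le \|RST - RFT\| = \|R(S-F)T\| \le \|R\|\,\|S-F\|\,\|T\|.
\]
Taking the infimum over all such $F$ and invoking the characterization above yields the claimed inequality.

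Finally I would raise to the $p$-th power and sum, obtaining
\[
\|RST\|_p^p = \sum_n s_n(RST)^p \le \|R\|^p \|T\|^p \sum_n s_n(S)^p = \big(\|R\|\,\|S\|_p\,\|T\|\big)^p,
\]
and taking $p$-th roots completes the argument; in particular, finiteness of the right-hand side shows a posteriori that $RST \in S_p(\mH_1,\mH_2)$, so that the left-hand side is meaningful. The only genuinely non-routine ingredient is the identification of the approximation numbers with the singular values for compact operators on Hilbert space (and the min-approximation characterization itself); this is standard, but it is where the real content sits, since everything after it is a one-line ideal estimate followed by summation. An alternative route, which I would mention but not pursue, is a duality argument using $\|X\|_p = \sup\{|\operatorname{Tr}(XY)| : \|Y\|_{p'} \le 1\}$ for $p>1$ with the case $p=1$ treated separately; the singular-value approach is preferable because it handles all $1 \le p < \infty$ uniformly.
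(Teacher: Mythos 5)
Your proof is correct and complete, but there is nothing in the paper to compare it against: the paper does not prove \Cref{Th:2.1} at all, quoting it as a well-known inequality with a citation to \cite[$\S XI.9.7$, Lemma 9(d)]{Dun-Schwartz}. Your singular-value route is in fact the standard textbook argument behind that cited lemma, so you have supplied the proof the paper delegates to the literature. Two small points are worth tidying. First, the phrase referring to ``\Cref{Th:2.1}'s underlying estimate for the operator norm'' should be deleted: the inequality $\|R(S-F)T\|\le \|R\|\,\|S-F\|\,\|T\|$ is nothing but submultiplicativity of the operator norm, and invoking the proposition being proved, even rhetorically, reads as circular when the step needs no such appeal. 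Second, the formula $\|K\|_p=\left(\sum_n s_n(K)^p\right)^{1/p}$ is recorded in \Cref{schatten-facts}(1) only for separable Hilbert spaces, whereas the proposition allows arbitrary ones; this is harmless, since a compact operator has separable range (equivalently, $|K|$ has at most countably many nonzero eigenvalues, so $\operatorname{Tr}(|K|^p)$ reduces to the countable sum of singular values), but the reduction deserves a sentence. With those cosmetic repairs, your chain --- the approximation-number characterization $s_n(K)=\inf\{\|K-F\|:\operatorname{rank}(F)<n\}$, the pointwise bound $s_n(RST)\le \|R\|\,s_n(S)\,\|T\|$ obtained because $RFT$ again has rank less than $n$, and summation of $p$-th powers --- is exactly right, handles all $1\le p<\infty$ uniformly, and correctly observes en route that $RST$ is compact and a posteriori lies in $S_p(\mathcal H_1,\mathcal H_2)$.
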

\begin{rem}\label{schatten-facts}
  Let $\mH, \mH_i$, $i=1,2$ be Hilbert spaces.
  \begin{enumerate}
\item If $\mathcal{H}_1$ and $\mathcal{H}_2$ are separable, then the
 Schatten $p$-norm of an operator $T$ in $S_p(\mathcal H_1,\mathcal H_2)$ can
  also be expressed as $$\|T\|_p=\left(\sum_{i=1}^\infty
  s_i(T)^p\right)^{1/p},$$ where $\{s_i(T):i=1,2,\dots \}$ are the
  singular values of $T$, i.e., the square roots of the eigenvalues of
  $|T|$.
  \item  $\left(S_p(\mathcal{H}),
    \|\cdot\|_p \right)$ is a Banach $*$-algebra with respect to the
    composition of operators as multiplication.
\item $\left(S_2(\mathcal{H}), \|\cdot\|_2 \right)$ consists of
 the Hilbert-Schmidt operators on $\mathcal{H}$ and forms a Hilbert space with
  respect to the inner product $\la T, S\ra:=\mathrm{Tr}(S^*\circ T)$
  for $T, S \in S_2(\mathcal{H})$.
  \end{enumerate}
\end{rem}
  For more on Schatten classes, we refer the reader to \cite{Palmer, Dun-Schwartz}.

\subsection{Projective tensor product}
       Let $A$ and $B$ be Banach algebras. Then, there is a natural
       multiplication structure on their algebraic tensor product $A \otimes B$
       satisfying $(a_1\otimes b_1)(a_2\otimes b_2)=a_1a_2\otimes
       b_1b_2$ for all $a_i \in A$ and $b_i \in B$, $i=1,2$. And, there
       are various ways to impose a normed algebra structure on $A
       \otimes B$.  For instance, for each $u \in A\otimes B$, its
       projective norm is given by
        \[
       \Vert u \Vert_\gamma =\inf\left\{\sum_{i=1}^{n}\Vert a_i\Vert
       \Vert b_i\Vert~:~u=\sum_{i=1}^{n}a_i\otimes b_i\right\}.
       \]
       This norm turns out to be a \textit{cross norm}, i.e., $\|a
       \otimes b\|_{\gamma} = \|a\| \|b\|$ for all $(a,b) \in A \times
       B$; and, the completion of the normed algebra $A\otimes B$ with
       respect to this norm is a Banach algebra and is denoted by
       $A\otimes^\gamma B$.  As per our requirements related to the
       projective tensor product, we shall borrow  freely some results from \cite{Ryan}.

\subsection{Limits along ultrafilters}
We now provide a brief overview of the last (and the most important)
ingredient of this paper, namely, the notion of `` limits along 
  filters''. Recall that a {\em filter} on a set $X$ is a
collection $\mathcal{F} \subseteq \mathcal{P}(X)$ such that
\begin{enumerate}
\item $X \in \mathcal{F}$;
\item $\emptyset \notin \mathcal{F}$;
\item whenever $A \in \mathcal{F}$ and $A \subseteq B\subseteq X$, then $B \in
  \mathcal{F}$; and
  \item $A \cap B \in \mathcal{F}$ for every pair $A, B$ in
    $\mathcal{F}$.
\end{enumerate}
The {\em cofinite filter}  on $X$ is the collection of all cofinite subsets
of $X$. A filter $\mathcal{F}$ is said to be a {\em principal filter}
generated by an element $x$, if $\mathcal{F}=\{ A \subseteq X: x \in
A\}$.  And, a filter $\mathcal{F}$ on a set $X$ is said to be an {\em
  ultrafilter} if for any $A \subseteq X$, either $A$ or $A^c$ is in
$\mathcal{F}$. Clearly, every principal filter is an ultrafilter; and,
it is known, by Zorn's Lemma, that a non-principal ultrafilter exists
on every infinite set.

Further, given a filter $\mathcal{F}$ on a topological space $Y$, it
is said to converge to an element $y$ in $Y$ if every open set $U$
containing $y$ is in $\mathcal{F}$. And, given a filter $\mathcal{F}$
on a set $X$ and a map $f$ from $ X$ into a topological space $ Y$,
$f$ is said to converge along $\mathcal{F}$ to a point $y$ in $Y$ (and
denoted as $\lim_{\mathcal F}f(x) = y$) if the filter $f_*
\mathcal{F}:=\{ A\subseteq Y: f^{-1}(A) \in \mathcal{F}\}$ on $Y$
converges to $y$. Note that $f_*\mathcal{F}$ is an ultrafilter on $Y$
if $\mathcal{F}$ is so on $X$. The following well known elementary
observations are quite useful:
\begin{lemma} \label{ultrafilter-limit}
  \begin{enumerate}
\item Every non-principal ultrafilter on a set $X$ contains the
  cofinite filter on $X$.
    \item The limit of an ultrafilter on a Hausdorff space, if there
      exists one, is unique.
\item Every ultrafilter on a compact Hausdorff space converges to
  a unique point.
  \end{enumerate}
\end{lemma}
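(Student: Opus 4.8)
The plan is to treat the three assertions separately, but to first isolate the single combinatorial fact about ultrafilters that drives all of them. Namely, I would record at the outset that in any ultrafilter $\mathcal{F}$ on $X$, for every $A \subseteq X$ \emph{exactly} one of $A$ and $A^c$ belongs to $\mathcal{F}$: at least one does by the defining property of an ultrafilter, and not both, since otherwise $A \cap A^c = \emptyset \in \mathcal{F}$ would contradict axiom (2). From this I would deduce the \emph{primality} of $\mathcal{F}$: if $A \cup B \in \mathcal{F}$ then $A \in \mathcal{F}$ or $B \in \mathcal{F}$ (for if both $A$ and $B$ were absent, then $A^c, B^c \in \mathcal{F}$, whence $(A \cup B)^c = A^c \cap B^c \in \mathcal{F}$ by axiom (4), contradicting $A \cup B \in \mathcal{F}$). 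This dichotomy and primality, together with closure under finite intersection, are the only tools I expect to need.

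For (1), I would first show that a non-principal ultrafilter contains no singleton: if $\{x\} \in \mathcal{F}$, then upward closure (axiom (3)) forces every set containing $x$ into $\mathcal{F}$, while the uniqueness half of the dichotomy above keeps every set avoiding $x$ out of $\mathcal{F}$, so $\mathcal{F} = \{A : x \in A\}$ would be principal, a contradiction. Next, using primality and induction on cardinality, no finite set can lie in $\mathcal{F}$, since a finite set in $\mathcal{F}$, being a finite union of singletons, would force some singleton into $\mathcal{F}$. Finally, given a cofinite $A$, its complement $A^c$ is finite, hence $A^c \notin \mathcal{F}$, so the ultrafilter dichotomy yields $A \in \mathcal{F}$; thus the cofinite filter is contained in $\mathcal{F}$.

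For (2), which in fact uses only that $\mathcal{F}$ is a filter, I would argue by contradiction: if $\mathcal{F}$ converged to distinct points $y_1, y_2$ in a Hausdorff space, I would choose disjoint open neighbourhoods $U_1 \ni y_1$ and $U_2 \ni y_2$; convergence places both $U_1$ and $U_2$ in $\mathcal{F}$, so $U_1 \cap U_2 = \emptyset \in \mathcal{F}$ by axiom (4), contradicting axiom (2). For (3), uniqueness is immediate from (2), so only existence remains. Here I would suppose $\mathcal{F}$ converges to no point; then each $y$ in the compact space admits an open neighbourhood $U_y \notin \mathcal{F}$, and these neighbourhoods cover the space. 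Passing to a finite subcover $U_{y_1}, \dots, U_{y_n}$ and taking complements, each $U_{y_i}^c \in \mathcal{F}$, so their finite intersection $\bigl(\bigcup_{i} U_{y_i}\bigr)^c = \emptyset$ lies in $\mathcal{F}$, again contradicting axiom (2).

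The only genuinely delicate point is the no-finite-sets step in (1): it is the sole place where the non-principal hypothesis is actually used, and the only one requiring the inductive primality argument. The remaining items are routine once the ultrafilter dichotomy and primality are in hand.
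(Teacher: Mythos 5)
Your proof is correct and complete: the ultrafilter dichotomy and primality are established soundly, the non-principality hypothesis is used exactly where you say it is (excluding finite sets in part (1)), and the Hausdorff and compactness arguments in (2) and (3) are the standard ones, with (2) correctly needing only the filter axioms. Note that the paper itself states this lemma without proof, as a list of well-known elementary observations, so there is no authorial argument to compare against; your write-up is the canonical textbook proof and would serve as a complete justification of the statement as the paper uses it.
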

  
  \section{The projective tensor product of Hilbert-Schmidt space}
Throughout this and the following section, $\mathcal H $ will denote
an infinite dimensional separable Hilbert space, i.e., $\mH \cong
\ell^2$; and, $\mK$ will denote the Banach algebra $S_2(\mathcal H)$
consisting of the Hilbert-Schmidt operators on $\mH$.  Recall that,
given any orthonormal basis $\{e_i: i \in \N\}$ of $\mathcal H$, the
rank one operators $\{e_{ij}:=e_i\otimes e_j : i, j\in \N \}$ on $\mH$
given by $(e_i\otimes e_j)(\xi) =\la \xi, e_j\ra e_i$ for $\xi \in
\mH$, form an orthonormal basis of $\mK$. In particular, $\mK$ can be
identified with $\mH \overline{\otimes} \mH$ (Hilbert space tensor
product) as Hilbert spaces. Also, in the Banach algebra $S_2(\mH)$, we
have
  \begin{equation}\label{e-ij-relations}
e_{ij} e_{kl} = (e_i \otimes e_j) \circ (e_k \otimes e_l) = \delta_{j,
  k} e_i \otimes e_l = \delta_{j, k}e_{il}
 \end{equation}
 for all $i, j, k, l \in\N$.

\subsection{Some useful identifications}
  We shall frequently use some natural identifications that we mention
  in this subsection.

The identification in the following lemma is a direct adaptation of
\cite[Corollary 4.11]{Ryan} as per our requirements.
\begin{lemma}\label{obp-to-S1}
 There is a surjective conjugate-linear isometry $\theta:\mathcal
 K\obp\mathcal K \to S_1(\mathcal K,\mK^*)$ such that $\theta(x
 \otimes y)(z)=\left<z,x\right>\left<\cdot,y\right>$ for all $x,y, z
 \in \mK$.
\end{lemma}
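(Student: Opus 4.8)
The plan is to realize $\theta$ as the conjugate-linear counterpart of the classical isometric identification of the projective tensor product of a Hilbert space with the trace-class operators (\cite[Corollary 4.11]{Ryan}), the conjugate-linearity entering solely through the Riesz isomorphism $R:\mK\to\mK^*$, $R(y)=\la\cdot,y\ra$.

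First I would check that the prescription on elementary tensors is legitimate. Fixing $x,y\in\mK$, the assignment $z\mapsto\la z,x\ra R(y)$ is the rank-one operator in $S_1(\mK,\mK^*)$ whose single nonzero singular value is $\|x\|\,\|y\|$; hence $\|\theta(x\otimes y)\|_1=\|x\|\,\|y\|$, which matches the cross-norm $\|x\otimes y\|_\gamma$. Since the inner product is conjugate-linear in each slot, the map $(x,y)\mapsto\theta(x\otimes y)$ is conjugate-bilinear and bounded, so by the universal property of the projective tensor product — read for conjugate-bilinear maps, e.g.\ by passing through the canonical conjugation $\mK\to\overline{\mK}$ and using $\overline{\mK}\obp\overline{\mK}=\overline{\mK\obp\mK}$ — it induces a well-defined conjugate-linear map $\theta:\mK\obp\mK\to S_1(\mK,\mK^*)$. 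For $u=\sum_i a_i\otimes b_i$ the triangle inequality for $\|\cdot\|_1$ gives $\|\theta(u)\|_1\le\sum_i\|a_i\|\,\|b_i\|$, and taking the infimum over all representations yields the contraction estimate $\|\theta(u)\|_1\le\|u\|_\gamma$.

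Next I would obtain surjectivity together with the reverse norm estimate from the singular value decomposition. Given $T\in S_1(\mK,\mK^*)$, write $T=\sum_n s_n\la\cdot,x_n\ra\,\eta_n$ with $\{x_n\}\subset\mK$ and $\{\eta_n\}\subset\mK^*$ orthonormal, $s_n>0$, and $\sum_n s_n=\|T\|_1$. Applying $R^{-1}$ produces an orthonormal sequence $y_n:=R^{-1}(\eta_n)\in\mK$ (the Riesz map, being a conjugate-linear isometry, preserves orthonormality), so that $T=\sum_n s_n\,\theta(x_n\otimes y_n)$. Because $\sum_n s_n\|x_n\|\,\|y_n\|=\sum_n s_n<\infty$, the series $u:=\sum_n s_n\,x_n\otimes y_n$ converges in $\mK\obp\mK$ with $\|u\|_\gamma\le\|T\|_1$; as the $s_n$ are real, conjugate-linearity of $\theta$ gives $\theta(u)=T$. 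This shows $\theta$ is onto and, together with the contraction estimate, forces $\|u\|_\gamma=\|T\|_1=\|\theta(u)\|_1$ for this particular preimage.

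Finally I would upgrade this to a \emph{global} isometry by proving injectivity. For $u\in\mK\obp\mK$ and $R'\in B(\mK^*,\mK)$, the trace pairing $\mathrm{Tr}(R'\,\theta(u))$ depends on $u$ only through the bounded conjugate-bilinear form $(x,y)\mapsto\la R'R(y),x\ra$ on $\mK\times\mK$; as $R'$ ranges over $B(\mK^*,\mK)$ these forms exhaust $(\mK\obp\mK)^*$ and hence separate the points of $\mK\obp\mK$. Thus $\theta(u)=0$ implies $u=0$, which is precisely the dual identification $(\mK\obp\mK)^*\cong B(\mK,\mK^*)\cong S_1(\mK,\mK^*)^*$ underlying \cite[Corollary 4.11]{Ryan}. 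Injectivity makes the singular-value preimage constructed above the unique preimage, so $\|u\|_\gamma=\|\theta(u)\|_1$ for every $u$, completing the proof. The one genuinely delicate point is this passage to an isometry: the contraction half is routine, but the reverse inequality rests on the approximation property of $\mK$ (used here through the singular value decomposition and the separation of points by bounded bilinear forms), and one must track the conjugate-linearity carefully so that the real singular values are not inadvertently conjugated away.
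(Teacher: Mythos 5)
Your proposal is correct and takes essentially the same route as the paper: both realize $\theta$ as the conjugate-linear avatar (via the Riesz map, i.e.\ the conjugate Banach space) of the classical identification $\mK\obp\mK\cong \mathcal{N}(\mK,\mK^*)=S_1(\mK,\mK^*)$ from \cite[Corollary 4.11]{Ryan}. The only difference is one of detail, not of method: where the paper simply notes that the isometry "can be shown on the lines of" Ryan's corollary, you supply that proof in the Hilbert-space case --- the contraction estimate, surjectivity with norm control via the singular value decomposition, and injectivity through the trace pairing against $B(\mK^*,\mK)$ --- and each of these steps, including the final upgrade to a global isometry via uniqueness of the SVD preimage, is sound.
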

\begin{proof}
For each $x , y \in \mK$, define $\varphi_{x,y}: \mK \rar \mK^*$ by
$\varphi_{x,y}(z) = \la x, z\ra \la \cdot, y\ra$ for $z\in
\mK$. Clearly, $\varphi_{x,y} \in S_1(\mK, \mK^*)$ and
$\|\varphi_{x,y}\|_1 = \|x\| \|y\| = \|x \ot y\|_{\gamma}$.

Note that, $\mK$ and $\mK^*$ being Hilbert spaces, it is easily
seen that the space of trace-class operators $S_1(\mK, \mK^*)$ is the
same as the space of nuclear operators $\mathcal{N}(\mK, \mK^*)$ and
that the trace-class norm is the same as the nuclear norm.  Consider the
conjugate Banach space $\overline{S_1(\mathcal{K},
  \mathcal{K}^*)}$. Then, on the lines of \cite[Corollary 4.11]{Ryan},
it can be shown that the linear mapping
\[
 \mK \ot \mK   \ni \sum_i x_i \ot
y_i \mapsto   \overline{\sum_i
  \varphi_{x_i, y_i}} \in \overline{S_1(\mK, \mK^*)}
\]
extends to a surjective linear isometric mapping
$\overline{\theta}:\mK \obp \mK\rar \overline{S_1(\mK, \mK^*)}$ such
that $\overline{\theta}(x\ot y)(z) = \overline{\varphi_{x,y}}(z)$ for all
$x,y, z \in \mK$. In particular, we obtain a surjective
conjugate-linear isometry $\theta:\mK \obp \mK \rar S_1(\mK, \mK^*)$
such that $\theta(x\ot y)(z) = \la z, x\ra \,\la \cdot, y\ra$ for all
$x,y, z \in \mK$.
\end{proof}
\begin{rem}\label{natural-identifications}
\begin{enumerate}
 \item In view of \Cref{obp-to-S1}, to each $u\in \mathcal K\obp\mathcal K$, we
   can associate an infinite matrix $[u_{ij, kl}]_{i,j,k,l\in \mathbb
     N}$, namely, the matrix of $\theta(u)$ with respect to the
   orthonormal basis $\{e_{ij} :i, j \in \N\}$ of $\mK$. Thus, $u_{ij,
     kl}:=\theta(u)(e_{kl})(e_{ij})$ for all $i, j, k, l
   \in \N$.\smallskip

Whenever convenient, we shall interchangeably use the
matrix  $u=\left[u_{ij, kl}\right]$ and
the element $u \in \mathcal K\obp\mathcal K$.

\item  There is natural
  surjective isometry
  \[
  \vartheta: B(\mathcal K,\mathcal K^*) \to
  (\mathcal{K}\obp\mathcal K)^*
  \]
  satisfying
	\[
	 \vartheta(f)\left(x \otimes y\right)= f(x)(y)
         \]
         for all $f \in B(\mK,\mK^*)$, $x,y\in \mK$ - see \cite[pg 24]{Ryan}.
\end{enumerate}
          (Notice that these identifications are not  algebra isomorphisms.) 

\end{rem}
 
\begin{lemma}\label{left-product}
With notations as in the preceding remark, for each $u=[u_{ij,
    kl}]_{i,j,k,l\in \mathbb N}$ in $\mathcal K\obp\mathcal K$ and
$m, n \in \N$, the matrix corresponding to the product $(
e_{mn}\otimes e_{mn})\cdot u$ is given by $[v_{ij, kl}]_{i,j,k,l\in
  \mathbb N}$, where $v_{ij, kl}=0$ if $i\neq m$ or $k\neq m$, and
$v_{mj, ml}=u_{nj, nl}$ for all $l,j\in \N$.
\end{lemma}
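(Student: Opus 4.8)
The plan is to first verify the asserted formula on elementary tensors and then extend it to all of $\mathcal K\obp\mathcal K$ by linearity and continuity. Fix $m,n\in\N$ and let $L$ denote the bounded linear operator of left multiplication by $e_{mn}\ot e_{mn}$ on $\mathcal K\obp\mathcal K$. For each fixed quadruple $i,j,k,l$, the assignment $u\mapsto v_{ij,kl}:=\theta(L(u))(e_{kl})(e_{ij})$ is a bounded conjugate-linear functional, being the composite of the bounded linear map $L$ with the conjugate-linear isometry $\theta$ of \Cref{obp-to-S1} followed by evaluation at $e_{kl}$ and $e_{ij}$; the assignment $u\mapsto u_{nj,nl}$ is conjugate-linear for the same reason. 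Since the elementary tensors span a dense subspace of $\mathcal K\obp\mathcal K$, it therefore suffices to check the identity $v_{ij,kl}=\delta_{m,i}\,\delta_{m,k}\,u_{nj,nl}$ in the case $u=x\ot y$ with $x,y\in\mK$ arbitrary.

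For such a $u$, the multiplicative structure of the tensor product gives $(e_{mn}\ot e_{mn})\cdot(x\ot y)=(e_{mn}x)\ot(e_{mn}y)$, where the products on the right are compositions of operators in $\mK$. Applying the defining formula of $\theta$ and of the matrix entries in \Cref{natural-identifications} then yields
\[
v_{ij,kl}=\theta\bigl((e_{mn}x)\ot(e_{mn}y)\bigr)(e_{kl})(e_{ij})=\la e_{kl},e_{mn}x\ra\,\la e_{ij},e_{mn}y\ra,
\]
where $\la\cdot,\cdot\ra$ denotes the Hilbert--Schmidt inner product on $\mK$ recalled in \Cref{schatten-facts}(3).

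The next step is to evaluate these two inner products. Using $\la S,T\ra=\operatorname{Tr}(T^*S)$ together with the identity $e_{mn}^*=e_{nm}$ and the relation \eqref{e-ij-relations}, I would compute
\[
\la e_{kl},e_{mn}x\ra=\operatorname{Tr}\bigl(x^*e_{mn}^*e_{kl}\bigr)=\delta_{m,k}\,\operatorname{Tr}(x^*e_{nl})=\delta_{m,k}\,\la e_{nl},x\ra,
\]
and, in exactly the same way, $\la e_{ij},e_{mn}y\ra=\delta_{m,i}\,\la e_{nj},y\ra$. Substituting these back gives $v_{ij,kl}=\delta_{m,i}\,\delta_{m,k}\,\la e_{nl},x\ra\,\la e_{nj},y\ra$. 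Since $u_{nj,nl}=\theta(x\ot y)(e_{nl})(e_{nj})=\la e_{nl},x\ra\,\la e_{nj},y\ra$ by definition, the right-hand side is precisely $\delta_{m,i}\,\delta_{m,k}\,u_{nj,nl}$. In particular $v_{ij,kl}$ vanishes unless $i=m$ and $k=m$, and $v_{mj,ml}=u_{nj,nl}$, which is the claim.

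The computation is routine once the conventions are fixed, so I do not expect any serious obstacle beyond careful bookkeeping. The two points that require attention are the conjugate-linearity of $\theta$, which is what makes both sides of the identity conjugate-linear in $u$ and thereby legitimizes the reduction to elementary tensors through density, and the correct tracking of the adjoint $e_{mn}^*=e_{nm}$ together with the Kronecker deltas produced by \eqref{e-ij-relations}.
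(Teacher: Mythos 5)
Your proof is correct and takes essentially the same route as the paper: the paper represents $u$ as an absolutely convergent series $\sum_r \beta_r\, x_r \otimes y_r$ and carries out the identical inner-product computation term by term, whereas you verify the identity on elementary tensors and extend by conjugate-linearity, density and continuity --- the same argument in slightly different packaging. Your key steps, namely $(e_{mn}\otimes e_{mn})\cdot(x\otimes y)=(e_{mn}x)\otimes(e_{mn}y)$ and the Kronecker deltas arising from $e_{nm}e_{kl}=\delta_{m,k}e_{nl}$, coincide exactly with those in the paper's proof.
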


\begin{proof}
	Recall, from \cite[$\S 2.1$]{Ryan}, that $u$ can be expressed
        as a sum $u=\sum_{i=1}^\infty \beta_i x_i\otimes
        y_i$ for some pair of null sequences $\{x_i\}$ and
        $\{y_i\}$ in  $\mathcal K$,
        and a sequence of scalars $\{\beta_i\}$ satisfying
        $\sum_{i=1}^\infty|\beta_i|<\infty$. Then, we have
        \begin{align*}
          u_{ij,kl}&=\theta(u)(e_{kl})(e_{ij})\\&=\sum_{r=1}^\infty\beta_r
           \la e_{kl}, x_r\ra \left<e_{ij},y_r\right>&&
          (\text{by  \Cref{natural-identifications}}(1))
	\end{align*}
        for all $i, j, k, l \in \N$.  Since, $(
        e_{mn}\otimes e_{mn})\cdot u=\sum_{i=1}^\infty \beta_i (e_{mn} x_i)\otimes (e_{mn}y_i)$, we obtain
        \[
        v_{ij,kl} = \theta\big(( e_{mn}\otimes e_{mn})\cdot
        u\big) (e_{kl})( e_{ij})=
        \sum_{r=1}^\infty \beta_r \la e_{kl}, e_{mn}
          x_r\ra\la  e_{ij},e_{mn} y_r\ra
          \]
                  for all $i, j, k, l \in \N$.  Notice that, by
                  \Cref{e-ij-relations}, $\la e_{kl}, e_{mn} x_r\ra=
                  \left<e_{nm} e_{kl}~, x_r\right> = 0$ if $k\neq m$
                  and $\la  e_{ij},e_{mn} y_r\ra = \left< e_{nm}
                  e_{ij},y_r \right> = 0$ if $i\neq m$.  Hence,
                  $v_{ij,kl}=0$ if either $i\neq m$ or $k\neq
                  m$. Further, it can be easily checked from the above
                  equations that
        \[
        v_{mj, ml}=\sum_{r=1}^\infty\beta_r
        \la e_{nl}, x_r\ra \la  e_{nj},y_r\ra = u_{nj, nl}
        \]
        for all $j, l \in \N$.
\end{proof}

\subsection{Functionals induced by non-principal ultrafilters} The following observation is the crux of this section.
\begin{prop}\label{phi-U}
  Let $A:= \mK \obp \mK$, $\mathcal U$ be a non-principal
  ultrafilter on $\mathbb N$ and $J:A \to A^{**}$ denote the canonical isometry. Then, the following hold:
\begin{enumerate}
\item The sequence $\{J(e_{nn} \ot e_{nn})\}$ converges along
  the ultrafilter $\mathcal{U}$, with respect to the weak*-topology, to a unique element
  (denoted as) $\phi_{\mathcal{U}} \in A^{**} \setminus J(A)$, which satisfies
\[
\phi_{\mathcal
  U}(f)=\lim_{\mathcal U}\left<f(e_{nn}),e_{nn}\right> \text{ for all $f \in A^{*}$}.
\]

\item For each $f \in A^*$, the function
\[
\N
\ni n \mapsto f_{ e_{nn}\otimes e_{nn}} \in A^*
\]
converges along the ultrafilter $\mU$ to a unique element in the
$w^*$-compact set $\{g \in A^*: \|g\| \leq \|f\|\}$ in $A^*$ with
respect to the $w^*$-topology.
\item We have
\begin{equation} \label{eqn:3.1}
(\phi_{\mathcal U}\diamond
  \psi)(f)=\psi\left(\overset{w^*}{\lim_{\mathcal U}} f_{ e_{nn}\otimes
    e_{nn}}\right) \mathrm{ and }\ (\phi_{\mathcal U}\Box \psi)(f) =
  \lim_{\mathcal U}\psi(f_{ e_{nn} \otimes e_{nn}})
\end{equation}
for all $\psi \in A^{**}$  and  $f\in A^*$.
\end{enumerate}
\end{prop}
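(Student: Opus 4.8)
The plan is to obtain the existence statements in (1) and (2) from $w^*$-compactness together with the ultrafilter-convergence principle of \Cref{ultrafilter-limit}(3), to read off the displayed formula for $\phi_{\mU}$ directly from the definition of $w^*$-convergence, and then to deduce (3) by a purely formal manipulation of the two Arens products, using the formula in (1) and the limit functional furnished by (2). The genuine content — and the main obstacle — is the assertion in (1) that $\phi_{\mU}\notin J(A)$; everything else is bookkeeping.

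For existence in (1), I would first note that $\|e_{nn}\ot e_{nn}\|_\gamma=\|e_{nn}\|_2^2=1$, so $\{J(e_{nn}\ot e_{nn})\}$ lies in the closed unit ball of $A^{**}$, which is $w^*$-compact and Hausdorff by Banach--Alaoglu. Hence, by \Cref{ultrafilter-limit}(3), the ultrafilter $\mU$ (pushed forward along $n\mapsto J(e_{nn}\ot e_{nn})$) converges to a unique point $\phi_{\mU}$ there. Unwinding $w^*$-convergence gives $\phi_{\mU}(f)=\lim_{\mU}J(e_{nn}\ot e_{nn})(f)=\lim_{\mU}f(e_{nn}\ot e_{nn})$ for every $f\in A^*$, which under the identification of \Cref{natural-identifications}(2) is exactly $\lim_{\mU}\la f(e_{nn}),e_{nn}\ra$. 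Part (2) is identical in spirit: since $\|f_{e_{nn}\ot e_{nn}}\|=\|(L_{e_{nn}\ot e_{nn}})^*f\|\le\|e_{nn}\ot e_{nn}\|_\gamma\,\|f\|=\|f\|$, the sequence $\{f_{e_{nn}\ot e_{nn}}\}$ sits inside the $w^*$-compact Hausdorff ball $\{g:\|g\|\le\|f\|\}$, and \Cref{ultrafilter-limit}(3) again yields a unique $w^*$-limit in it.

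The crux is $\phi_{\mU}\notin J(A)$, which I would prove by contradiction using two explicit functionals built from the diagonal rank-one operators $e_{mm}=e_m\ot e_m\in\mK$. Because $\theta$ is conjugate-linear, reading off matrix entries via $\theta$ is conjugate-linear, so to stay inside $A^*$ one must use genuinely bilinear forms: for $m\in\N$ let $\lambda_m\in A^*$ be induced by $(x,y)\mapsto\la x,e_{mm}\ra\la y,e_{mm}\ra$ (of norm $\le1$, since $\la\cdot,\cdot\ra$ is linear in its first slot and $\|e_{mm}\|=1$), and let $\Lambda\in A^*$ be induced by $(x,y)\mapsto\sum_m\la x,e_{mm}\ra\la y,e_{mm}\ra$; the latter is well defined of norm $\le1$ because $(\la x,e_{mm}\ra)_m$ and $(\la y,e_{mm}\ra)_m$ lie in $\ell^2$ with norms $\le\|x\|_2,\|y\|_2$ (orthonormality of $\{e_{mm}\}$) and Cauchy--Schwarz applies. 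On basis vectors $\lambda_m(e_{nn}\ot e_{nn})=\delta_{mn}$ and $\Lambda(e_{nn}\ot e_{nn})=1$ for all $n$. Moreover, expanding $u\in A$ as $\sum_i\beta_i x_i\ot y_i$ with $\{x_i\},\{y_i\}$ null sequences and $\sum_i|\beta_i|<\infty$ (as in the proof of \Cref{left-product}), a routine Fubini argument — justified by the estimate $\sum_m\sum_i|\beta_i|\,|\la x_i,e_{mm}\ra|\,|\la y_i,e_{mm}\ra|\le(\sup_i\|x_i\|\,\|y_i\|)\sum_i|\beta_i|<\infty$ — gives $\Lambda(u)=\sum_m\lambda_m(u)$ for every $u\in A$. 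Now suppose $\phi_{\mU}=J(u)$. Since $\mU$ is non-principal it contains the cofinite filter (\Cref{ultrafilter-limit}(1)), so $\lim_{\mU}\delta_{mn}=0$, whence $\lambda_m(u)=\phi_{\mU}(\lambda_m)=0$ for every $m$ and therefore $\Lambda(u)=\sum_m\lambda_m(u)=0$; but $\Lambda(u)=\phi_{\mU}(\Lambda)=\lim_{\mU}1=1$, a contradiction. Thus $\phi_{\mU}\in A^{**}\setminus J(A)$.

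Finally, (3) follows formally from (1) and (2) together with the definitions of $\Box$ and $\diamond$. For the first product, $(\phi_{\mU}\Box\psi)(f)=\phi_{\mU}({}_\psi f)$ with ${}_\psi f(a)=\psi(f_a)$; applying the formula of (1) to $\eta={}_\psi f\in A^*$ gives $\phi_{\mU}({}_\psi f)=\lim_{\mU}{}_\psi f(e_{nn}\ot e_{nn})=\lim_{\mU}\psi(f_{e_{nn}\ot e_{nn}})$, which is the second identity. For the second product, $(\phi_{\mU}\diamond\psi)(f)=\psi(f_{\phi_{\mU}})$ with $f_{\phi_{\mU}}(a)=\phi_{\mU}({}_a f)$; since ${}_a f(e_{nn}\ot e_{nn})=f\big((e_{nn}\ot e_{nn})\,a\big)=f_{e_{nn}\ot e_{nn}}(a)$, the formula of (1) yields $f_{\phi_{\mU}}(a)=\lim_{\mU}f_{e_{nn}\ot e_{nn}}(a)$ for every $a\in A$, so that $f_{\phi_{\mU}}$ coincides with the $w^*$-limit functional produced in (2); evaluating $\psi$ there gives the first identity. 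The only nontrivial inputs are that the formula in (1) holds for every $\eta\in A^*$ and that the $w^*$-limit in (2) exists; the remaining manipulations are routine, so the whole difficulty of the proposition is concentrated in the preceding paragraph.
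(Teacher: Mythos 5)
Your proposal is correct, and most of it coincides with the paper: parts (1)-existence and (2) are the same Banach--Alaoglu plus \Cref{ultrafilter-limit} argument, and your computation of the $\Box$-identity in (3) is verbatim the paper's. The genuine divergence is at the crux, $\phi_{\mU}\notin J(A)$. The paper argues structurally in the picture $A^{*}\cong B(\mK,\mK^{*})$: it shows that $\phi_{\mU}$ annihilates every finite-rank operator (because $e_{nn}\to 0$ weakly in $\mK$, so $\la x_i,e_{nn}\ra\to 0$ along $\mU$), and then, via the singular value decomposition of $\theta(u)$ together with the cross-norm property to get $0\neq u=\sum_i\mu_i x_i\ot y_i$ convergently in $\|\cdot\|_{\gamma}$, that $J(u)$ never vanishes on the finite-rank operators; these two facts separate $\phi_{\mU}$ from $J(A)$. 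You instead run a self-contained contradiction with two explicit diagonal functionals: $\lambda_m(x\ot y)=\la x,e_{mm}\ra\,\la y,e_{mm}\ra$ and its summed version $\Lambda$, bounded by Bessel and Cauchy--Schwarz; your Fubini estimate (using the standard representation $u=\sum_i\beta_i x_i\ot y_i$ with null sequences and $\sum_i|\beta_i|<\infty$, the same input as \Cref{left-product}) correctly justifies $\Lambda(u)=\sum_m\lambda_m(u)$ for every $u\in A$, while $\phi_{\mU}(\lambda_m)=\lim_{\mU}\delta_{mn}=0$ and $\phi_{\mU}(\Lambda)=1$, so $\phi_{\mU}=J(u)$ is untenable. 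Your route is more elementary --- no SVD and no convergence argument for the singular expansion in the projective norm --- at the cost of being a one-off contradiction, whereas the paper's version buys the stronger structural statement that $\phi_{\mU}$ vanishes on all of $B_{00}(\mK,\mK^{*})$ while no nonzero element of $J(A)$ does (your $\Lambda$ is in fact just the diagonal part of the paper's witness $\eta$). Finally, your derivation of the $\diamond$-identity in (3), identifying $f_{\phi_{\mU}}$ pointwise with the $w^{*}$-limit from (2) via $f_{\phi_{\mU}}(a)=\phi_{\mU}({}_{a}f)=\lim_{\mU}f_{e_{nn}\ot e_{nn}}(a)$, is a direct and arguably cleaner substitute for the paper's iterated-limit argument with nets, which it states only heuristically.
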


\begin{proof}
  (1): Clearly, the sequence $\{J(e_{nn} \ot e_{nn})\}$ is contained
  in the closed unit ball $B_1(A^{**})$, a $w^*$-compact set. Hence,
  by \Cref{ultrafilter-limit}, it converges to a unique element along
  the ultrafilter $\mathcal{U}$, say, $\phi_{\mathcal{U}}$ in
  $B_1(A^{**})$, with respect to the weak*-topology. Also, we clearly
  see, via \Cref{natural-identifications}(2), that
  \[
\phi_{\mathcal U}(f) = \lim_{\mU}\ J(e_{nn}\ot e_{nn})(f)=
\lim_{\mU}\la f(e_{nn}), e_{nn}\ra \text{ for all } f \in A^*.
\]
Notice that $\phi_{\mathcal U}\neq0$ because $\phi_{\mathcal
  U}(\eta)=1$, where $\eta:\mathcal K\to\mathcal K^*$ is defined as
$\eta(e_{ij})=\left<\cdot,e_{ij}\right>$ for each $i,j\in \mathbb
N$. Further, since $(\mK\obp \mK)^{**}\cong B(\mK,\mK^*)^*$, in order
to show that $\phi_{\mathcal U}\notin J({\mathcal K\obp\mathcal K})$,
it suffices to show that $\phi_{\mathcal U}$ vanishes on the space of
finite rank operators $B_{00}(\mK,\mK^*)$ whereas $J(u)$ does not
vanish on $B_{00}(\mK,\mK^*)$ for any non-zero $u \in\mK \obp \mK$.

To see this, let $f:\mK\to\mK^*$ be a finite rank operator defined as
$f(z) =\sum_{i=1}^k \lambda_i\left<z ,x_i\right>\la\cdot, x_i\ra$,
where $\{x_i\}_{i=1}^k$ is an orthonormal set in $\mathcal K$ and
$\{\lambda_i\}_{i=1}^k$ are some scalars. Note that, $e_{nn} \to
0$ weakly in $\mK$; so, the function $\N \ni n \mapsto \la x_i, e_{nn}
\ra \in \C$ converges to $0$ along the non-principal ultrafilter $\mU$
(as $\mU$ contains the cofinite filter on $\N$).  Thus, via the
identification $B(\mK,\mK^*)^* \cong (\mK \obp \mK)^{**}$, we obtain
\[
\phi_{\mathcal U}(f)  = \lim_{\mathcal
  U}\sum_{i=1}^k\lambda_i|\left<x_i,e_{nn}\right>|^2=0.
\]
In particular,  $\phi_\mU$ vanishes on $B_{00}(\mK)$.

Let $0\neq u \in \mK \obp \mK$. Then, the (possibly finite) sequence
$\{\mu_i\}$ consisting of the singular values of $\theta(u) \in
S_1(\mK,\mK^*)$ is absolutely summable. Also, by the singular value
decomposition of $\theta(u)$, there exist (possibly finite) orthonormal sequences
$\{x_i\}$ and $\{y_i\}$ in $\mK$ such that $\theta(u)(x)= \sum_i \mu_i
\la x , x_i\ra \la\cdot,y_i\ra$ for all $x \in \mK$, i.e., the
sequence of operators
\[
\mathcal{K} \ni z\mapsto \sum_{i=1}^n \mu_i\la z, x_i\ra \otimes
\la\cdot,y_i\ra \in \mathcal{K}^*
\]
converges strongly to $\theta(u)$. In fact, since $\sum_i \mu_i
< \infty$ and since $\|\cdot \|_\gamma$ is a cross-norm, it follows
that $u = \sum_{i=1}^{\infty} \mu_i x_i \otimes y_i$ in $\mK \obp
\mK$. Then, we observe that $J(u)(x_i \otimes y_i) = \mu_i \neq 0$ for
all $i$. Hence, $J(u)$ does not vanish on $B_{00}(\mK)$.
\smallskip

(2): Let $f \in A^*$. Then,  we have
\[
\|f_{e_{nn} \otimes e_{nn}}\| \leq \|f\|\|
e_{nn}\otimes e_{nn}\|_{\gamma} \leq \|f\|
\]
for all $n\in \N$.  Since the closed unit ball of $A^*$ is
weak*-compact, it follows from \Cref{ultrafilter-limit} that the
function $\N\ni n \mapsto f_{e_{nn}\otimes e_{nn}}$ converges along
the ultrafilter $\mU$ to a unique element in the weak*-compact set
$\{g \in A^*: \|g\| \leq \|f\|\}$, with respect to the weak*-topology
on $A^*$.\smallskip

(3): Let $\phi, \psi \in A^{**}$ and $f\in A^*$.  Notice
that, due to double limit criteria $$\phi\diamond
\psi=\lim_{\beta}\lim_{\alpha}f(\phi_\alpha\psi_\beta)=\lim_\beta
\left(\overset{w^*}{\lim_\alpha}
f_{\phi_\alpha}\right)(\psi_\beta)=\psi(\overset{w^*}{\lim_\alpha}
f_{\phi_\alpha})$$ for any pair of nets $\{J(\phi_\alpha)\}$ and
$\{J(\psi_\beta)\}$ in $J(A)$ converging to $\phi$ and $\psi$, respectively, in the
weak* topology of $A^{**}$. Since $\phi_{\mathcal
  U}=\overset{w^*}{\lim}_{\mathcal U} J(e_{nn}\otimes e_{nn})$, using 
(2), we deduce that
\[
(\phi_{\mathcal U}\diamond
\psi)(f)=\psi\left(\overset{w^*}{\lim_{\mathcal U}}f_{ e_{nn}\otimes
  e_{nn}}\right).
\]

And, for the box product, we have
\begin{eqnarray*}
(\phi_{\mathcal U}\Box \psi)(f)& =&\phi_{\mathcal U}(_\psi f)\\ &
  =&\lim_{\mathcal U}\left<_\psi f(e_{nn}),~
  e_{nn}\right>\\ &=&\lim_{\mathcal U}~_\psi f(e_{nn}\otimes e_{nn})
  \qquad \qquad \text{(by \Cref{natural-identifications}(2))}\\ & =&
  \lim_{\mathcal U}\psi(f_{ e_{nn} \otimes e_{nn}}).
\end{eqnarray*}

\end{proof}
\subsection{The main theorem}
We are now all set to establish the main result of this article that
the Banach algebra $A:=\mK \obp \mK$ is not strongly Arens
irregular. We first analyze the annihilating properties of its bidual
space $(\mK \obp \mK)^{**}$ with respect to the two Arens products
$\Box$ and $\diamond$.

Recall that, for any subset $S$ of a Banach algebra $B$, the {\em left} and {\em right
annihilator ideals} of $S$ in $B$ are defined, respectively, as
\[\begin{array}{rcl}
\mathcal{A}^l_B(S)& = & \{x\in
  B~:~xs=0 \text{ for all } s\in
  S\}; \text{ and}\\ \mathcal{A}^r_B(S)&= &\{x\in
  B~:~sx=0 \text{ for all } s\in S\}.
\end{array}
\]
$B$ is said to be an {\em annihilator Banach algebra} if
$\mathcal{A}^l_B(B)= (0) = \mathcal{A}^r_B(B)$ and for every proper
closed left (resp., right) ideal $I$ (resp., $J$), $\mathcal{A}^r_B(I)
\neq (0 ) \neq \mathcal{A}^l_B(J)$. Most of the common Banach algebras
(such as function spaces and operator algebras) are annihilator
algebras. For more, see \cite{Palmer, Rickart}.\smallskip

An element $T \in S_1(\mK,\mK^*)$ is said to have finite support if its
matrix $[T]_{ij, kl}$ (as in \Cref{natural-identifications}(1)) has only finitely many non-zero entries.
\begin{thm}\label{ann-diamond}
	For any non-principal ultrafilter $\mathcal U$ on $\mathbb
        N $,
        $$
        \mathcal{A}^r_{(A^{**},\diamond)}\left(\{\phi_{\mathcal
          U}\}\right)=A^{**}.
        $$
\end{thm}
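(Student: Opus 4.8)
The plan is to show directly that every $\psi \in A^{**}$ lies in the right $\diamond$-annihilator of $\phi_{\mathcal U}$, i.e.\ that $\phi_{\mathcal U}\diamond\psi=0$ for all $\psi\in A^{**}$. By the first formula in \Cref{phi-U}(3), $(\phi_{\mathcal U}\diamond\psi)(f)=\psi\big(\overset{w^*}{\lim_{\mathcal U}} f_{e_{nn}\ot e_{nn}}\big)$ for every $\psi\in A^{**}$ and $f\in A^*$. Consequently, it suffices to prove the single assertion that $\overset{w^*}{\lim_{\mathcal U}} f_{e_{nn}\ot e_{nn}}=0$ in $A^*$ for each $f\in A^*$: once this is known, the inner argument of $\psi$ is $0$, so $(\phi_{\mathcal U}\diamond\psi)(f)=\psi(0)=0$ for all $\psi$ and $f$ simultaneously, which is exactly the claim. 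The conceptual crux is precisely this reduction; after it, what remains is a routine boundedness-and-density argument together with an elementary vanishing of products.

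To prove that the weak*-limit vanishes, recall from \Cref{phi-U}(2) that $g:=\overset{w^*}{\lim_{\mathcal U}} f_{e_{nn}\ot e_{nn}}$ exists and satisfies $\|g\|\le\|f\|$; being a bounded functional on $A$, it is enough to verify $g(u)=\lim_{\mathcal U} f_{e_{nn}\ot e_{nn}}(u)=0$ for $u$ ranging over the dense subspace of $A=\mK\obp\mK$ spanned by the elementary tensors $e_{ab}\ot e_{cd}$, $a,b,c,d\in\N$. Unwinding the definition of $f_{e_{nn}\ot e_{nn}}$ gives $f_{e_{nn}\ot e_{nn}}(u)=f\big((e_{nn}\ot e_{nn})\cdot u\big)$, so everything reduces to a computation of the left product $(e_{nn}\ot e_{nn})\cdot u$.

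For $u=e_{ab}\ot e_{cd}$, \Cref{e-ij-relations} yields $(e_{nn}\ot e_{nn})\cdot(e_{ab}\ot e_{cd})=(e_{nn}e_{ab})\ot(e_{nn}e_{cd})=\delta_{na}\delta_{nc}\,e_{nb}\ot e_{nd}$, which is non-zero for at most one value of $n$ (namely $n=a=c$); equivalently, this is the $m=n$ instance of \Cref{left-product}. Since $\mathcal U$ is non-principal it contains the cofinite filter by \Cref{ultrafilter-limit}(1), so the cofinite set $\{n:(e_{nn}\ot e_{nn})\cdot u=0\}$ belongs to $\mathcal U$, whence $\lim_{\mathcal U} f\big((e_{nn}\ot e_{nn})\cdot u\big)=0$. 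By linearity the same holds for every $u$ in the dense span, so $g=0$. Therefore $\phi_{\mathcal U}\diamond\psi=0$ for all $\psi\in A^{**}$, and $\mathcal{A}^r_{(A^{**},\diamond)}\big(\{\phi_{\mathcal U}\}\big)=A^{**}$.
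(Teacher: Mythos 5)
Your proposal is correct and takes essentially the same approach as the paper: both reduce the claim via \Cref{phi-U}(3) to showing that $\overset{w^*}{\lim_{\mathcal U}} f_{e_{nn}\otimes e_{nn}}=0$, and both rest on the fact that left multiplication by $e_{nn}\otimes e_{nn}$ eventually kills a dense set of elements of $\mK\obp\mK$. The only cosmetic difference is that the paper establishes norm convergence $(e_{nn}\otimes e_{nn})\cdot u\to 0$ for every $u$ via \Cref{left-product} and then passes to the ultrafilter limit by uniqueness, whereas you evaluate the $\mathcal U$-limit directly on the dense span of the basis tensors $e_{ab}\otimes e_{cd}$ and conclude $g=0$ from the boundedness of the limit functional; both routes are sound.
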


\begin{proof}
Let $\psi \in A^{**}$ and $f \in A^*$. We first assert that
        $f_{ e_{nn}\otimes e_{nn}}\stackrel{w^*}{\longrightarrow} 0$ as $n \to \infty$.\smallskip

 To prove this, it is sufficient to show that $\lim_{n\to \infty}f_{
   e_{nn}\otimes e_{nn}}(u) = 0$ for all $u \in \mK \obp \mK$. Notice
 that $(e_{nn}\otimes e_{nn})\cdot u\to 0$ in norm for all $u\in
 \mathcal K\otimes\mathcal K$ due to \Cref{left-product}. Hence, it
 converges to $0$ in norm for all $u\in \mathcal K\obp\mathcal K$,
 because $\mathcal K\otimes \mathcal K$ is dense in $\mathcal K
 \obp\mathcal K$. Thus, $f_{e_{nn}\otimes e_{nn}}(u)=f((e_{nn}\otimes
 e_{nn})\cdot u)\to 0$ for all $u\in \mathcal K\obp\mathcal K$; thereby, implying that
 $f_{e_{nn}\otimes e_{nn}}\stackrel{w^*}{\longrightarrow}0$.

Next, since $\mathcal U$ contains the cofinite filter - see
\Cref{ultrafilter-limit}, the set $\{n : f_{e_{nn}\otimes e_{nn}}\in
U\}$, being cofinite, is in $\mathcal U$, for each weak* open
neighborhood $U$ of $0$. Hence, $0$ is a weak* limit of the function
$\N \ni n \mapsto f_{e_{nn}\otimes e_{nn}}\in A^*$ along the
ultrafilter $\mathcal U$.  And, since the limit along an ultrafilter
in a Hausdorff space is unique, it follows from \Cref{eqn:3.1} that
$(\phi_{\mathcal U}\diamond \psi)(f)=0$.  Since $\psi$ and $f$ were
arbitrary, this implies that
	$$\mathcal{A}^r_{(A^{**},\diamond)}(\{\phi_{\mathcal
  U}\})=A^{**}.$$
\end{proof}

Now we turn to the first Arens product $\Box$ on $(\mK \obp
\mK)^{**}$.
\begin{thm}\label{ann-box}
	For any non-principal ultrafilter $\mathcal U$ on $\mathbb
        N$, $$\mathcal{A}^r_{(A^{**},\Box)}\left(\{\phi_{\mathcal
          U}\}\right)=A^{**}.$$
	\end{thm}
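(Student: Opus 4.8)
The plan is to reduce the statement, via \Cref{phi-U}(3), to a single summability estimate. By \Cref{eqn:3.1}, for every $\psi\in A^{**}$ and every $f\in A^*$ we have $(\phi_{\mathcal U}\Box\psi)(f)=\lim_{\mathcal U}\psi(f_{e_{nn}\otimes e_{nn}})$, so it suffices to prove that $\lim_{\mathcal U}\psi(f_{e_{nn}\otimes e_{nn}})=0$ for all such $\psi$ and $f$. It is tempting simply to reuse the fact, established in the proof of \Cref{ann-diamond}, that $f_{e_{nn}\otimes e_{nn}}\stackrel{w^*}{\longrightarrow}0$; however, unlike in the $\diamond$ case, here a \emph{general} $\psi\in A^{**}$ need not be weak*-continuous on $A^*$ (it may carry a singular part), so the weak*-convergence of $f_{e_{nn}\otimes e_{nn}}$ to $0$ does not by itself force $\psi(f_{e_{nn}\otimes e_{nn}})\to 0$. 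This is precisely the main obstacle, and surmounting it is the whole content of the theorem.

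I would instead prove the stronger, \emph{unconditional} estimate $\sum_{n=1}^{\infty}\bigl|\psi(f_{e_{nn}\otimes e_{nn}})\bigr|\le \|\psi\|\,\|f\|$. Granting this, the terms $\psi(f_{e_{nn}\otimes e_{nn}})$ tend to $0$ in the ordinary sense; since a non-principal ultrafilter contains the cofinite filter (\Cref{ultrafilter-limit}(1)), the limit along $\mathcal U$ of an ordinarily convergent sequence coincides with its ordinary limit, and hence $\lim_{\mathcal U}\psi(f_{e_{nn}\otimes e_{nn}})=0$, as required. To prove the estimate, fix $N$, choose unimodular scalars $\epsilon_n$ with $\epsilon_n\,\psi(f_{e_{nn}\otimes e_{nn}})=|\psi(f_{e_{nn}\otimes e_{nn}})|$, and write $\sum_{n=1}^N|\psi(f_{e_{nn}\otimes e_{nn}})|=\psi\bigl(\sum_{n=1}^N\epsilon_n f_{e_{nn}\otimes e_{nn}}\bigr)\le \|\psi\|\,\bigl\|\sum_{n=1}^N\epsilon_n f_{e_{nn}\otimes e_{nn}}\bigr\|$. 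Thus everything reduces to the uniform bound $\bigl\|\sum_{n=1}^N\epsilon_n f_{e_{nn}\otimes e_{nn}}\bigr\|\le\|f\|$, independent of $N$ and of the signs.

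For this bound I would test against an arbitrary $u\in A$ with $\|u\|_\gamma\le 1$. Recalling $f_{e_{nn}\otimes e_{nn}}(u)=f\bigl((e_{nn}\otimes e_{nn})\cdot u\bigr)$, one gets $\bigl|\sum_{n=1}^N\epsilon_n f_{e_{nn}\otimes e_{nn}}(u)\bigr|\le\|f\|\sum_{n=1}^N\|(e_{nn}\otimes e_{nn})\cdot u\|_\gamma$, so it remains only to establish the clean inequality $\sum_{n=1}^{\infty}\|(e_{nn}\otimes e_{nn})\cdot u\|_\gamma\le\|u\|_\gamma$ for every $u\in A$. This is where the Hilbert-space geometry of $\mK=S_2(\mathcal H)$ enters: taking a representation $u=\sum_i x_i\otimes y_i$ with $\sum_i\|x_i\|_2\|y_i\|_2$ close to $\|u\|_\gamma$, one has $(e_{nn}\otimes e_{nn})\cdot(x_i\otimes y_i)=(e_{nn}x_i)\otimes(e_{nn}y_i)$, and a direct computation gives $e_{nn}x_i=e_n\otimes(x_i^*e_n)$, so that $\|e_{nn}x_i\|_2=\|x_i^*e_n\|$ and $\sum_n\|e_{nn}x_i\|_2^2=\sum_n\|x_i^*e_n\|^2=\|x_i\|_2^2$ (and likewise for $y_i$). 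Cauchy--Schwarz then yields $\sum_n\|e_{nn}x_i\|_2\|e_{nn}y_i\|_2\le\|x_i\|_2\|y_i\|_2$; summing over $i$, using that $\|\cdot\|_\gamma$ is a cross norm together with the triangle inequality, and taking the infimum over representations of $u$, gives $\sum_n\|(e_{nn}\otimes e_{nn})\cdot u\|_\gamma\le\|u\|_\gamma$. Combining the three steps completes the argument; the one genuinely substantive point, as emphasized above, is that the mutual \emph{orthogonality} of the idempotents $u\mapsto(e_{nn}\otimes e_{nn})\cdot u$ upgrades mere weak*-convergence to absolute summability, which is exactly what is needed to control the possibly-singular functionals $\psi$.
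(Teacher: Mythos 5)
Your proof is correct, and it deserves comment because the key estimate is obtained by a genuinely different route than the paper's. The overall skeleton is the same: both arguments reduce via \Cref{eqn:3.1} to showing $\lim_{n\to\infty}\psi(f_{e_{nn}\otimes e_{nn}})=0$ and invoke \Cref{ultrafilter-limit}(1) to pass from the ordinary limit to the limit along $\mathcal U$; and both hinge on the uniform bound $\|\sum_{n}c_n f_{e_{nn}\otimes e_{nn}}\|\leq\|f\|$ for unimodular scalars $c_n$. But the paper proves this bound by transporting everything through the identification $\theta:\mK\obp\mK\to S_1(\mK,\mK^*)$ of \Cref{obp-to-S1}: it argues by contradiction, extracts a subsequence with $|\psi(f_{e_{n_tn_t}\otimes e_{n_tn_t}})|>\epsilon$, defines a norm-one diagonal multiplier $Q_N$ with $Q_N(e_{n_tj})=c_te_{n_tj}$, observes $\theta\left(\sum_{t=1}^N c_t u_{n_t}\right)=Q_N^*\theta(u)Q_N$, and applies the trace-norm inequality of \Cref{Th:2.1} to get $\left\|\sum_{t=1}^N c_t u_{n_t}\right\|_\gamma\leq\|u\|_\gamma$. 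You instead prove the pointwise stronger inequality $\sum_{n=1}^\infty\|(e_{nn}\otimes e_{nn})\cdot u\|_\gamma\leq\|u\|_\gamma$ directly from projective-norm representations, the identity $e_{nn}x=e_n\otimes(x^*e_n)$ (so $\sum_n\|e_{nn}x\|_2^2=\|x\|_2^2$), and Cauchy--Schwarz; note that a sum of norms dominates the norm of any signed sum, but not conversely, so your estimate is genuinely stronger than the one the paper extracts via $Q_N$. What your route buys: it avoids the trace-class identification and \Cref{Th:2.1} entirely, runs directly rather than by contradiction, and yields the cleaner quantitative conclusion $\sum_n|\psi(f_{e_{nn}\otimes e_{nn}})|\leq\|\psi\|\,\|f\|$ (i.e., $\sum_n f_{e_{nn}\otimes e_{nn}}$ is weakly unconditionally Cauchy in $A^*$), from which the theorem is immediate. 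What the paper's route buys: the conjugation identity $Q_N^*\theta(u)Q_N$ is a reusable structural device that works verbatim for other diagonal cut-downs in $S_1(\mK,\mK^*)$ and does not require a Hilbertian Cauchy--Schwarz argument. Your opening diagnosis is also exactly right: the weak*-null argument of \Cref{ann-diamond} cannot be recycled here because a general $\psi\in A^{**}$ is not weak*-continuous on $A^*$, and this is precisely why both proofs must establish a uniform norm bound on unimodular combinations; the remaining details in your write-up (the interchange of the nonnegative double sum, the infimum over representations, and the passage to the ultrafilter limit) are all sound.
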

\begin{proof}
Let $\psi\in (\mK \obp \mK)^{**}$ and $f\in (\mathcal K\obp\mathcal
K)^{*}$. Then, from \Cref{eqn:3.1}, we know that $(\phi_{\mathcal U}
\Box \psi)(f) = \lim_{\mathcal{ U}} \psi(f_{e_{nn} \ot e_{nn}})$.
Since $\mathcal{U}$ contains the co-finite ultrafilter, it can be
easily seen that $\lim_{\mathcal{ U}} \psi(f_{e_{nn} \ot e_{nn}})$ equals
$\lim_{n\to \infty} \psi(f_{e_{nn} \ot e_{nn}})$ if the latter exists. So, in
order to show that $ (\phi_{\mathcal U}\Box \psi)(f)=0 $, it suffices
to show that $\lim_{n\to \infty}\psi(f_{ e_{nn} \otimes e_{nn}}) =0$.

To prove this, let us assume on the contrary that the sequence
$\{\psi(f_{e_{nn} \otimes e_{nn}})\}$ does not converge to $0$ as $n$
tends to infinity. Then, there exists an $\epsilon>0$ for which we can choose an
increasing sequence $\{n_t\}_{t=1}^\infty$ of natural numbers such
that $|\psi(f_{ e_{n_tn_t} \otimes e_{n_tn_t}})|>\epsilon$ for all $t
\geq 1$.  Let us define $g_{t}\in A^*$ for each natural number $t$ as
$g_{t}=c_{t}\cdot f_{ e_{n_tn_t}\otimes e_{n_tn_t}},$ where
$c_t=\frac{\overline{\psi(f_{ e_{n_tn_t} \otimes
      e_{n_tn_t}})}}{\left\vert\psi(f_{e_{n_tn_t} \otimes
    e_{n_tn_t}})\right\vert}$. Now, for each $N \in \N$, let
$h_N:=\sum_{t=1}^Ng_{t}$. Then,
\[
	|h_N(u)| = \left\vert f\left (\sum_{t=1}^N c_t(
        e_{n_tn_t}\otimes e_{n_tn_t})\cdot u\right)\right\vert \leq
        ||f||\,
        \left\vert\left\vert\sum_{t=1}^N(c_tu_{n_t})\right\vert\right\vert_{\gamma}
\]
for all $u \in \mK\obp \mK$ and $N \in \N$, where $u_{n_t}=(e_{n_tn_t}\otimes e_{n_tn_t})\cdot u$.

For each $N \in \N$, consider the linear operator $Q_N:\mathcal K\to
\mathcal K$ satisfying $Q_N(e_{n_tj})=c_t e_{n_tj}$ for $t=1,2,..,N$,
$j\in \mathbb N$ and $Q_N(e_{ij})=0$ otherwise. Clearly,
$||Q_N||=1$. Then, one can easily verify, through the actions on the
orthonormal basis $\{e_{ij}\}$, that
\[
\theta\left(\sum_{t=1}^Nc_tu_{n_t}\right)=Q_N^*\theta(u)Q_N
\]
for all $u \in \mK \obp \mK$ and $N \in \N$. Thus, from \Cref{Th:2.1},
we obtain
\[
\left\|\theta\left(\sum_{t=1}^Nc_tu_{n_t}\right)\right\|_1=\left\|\sum_{t=1}^Nc_tu_{n_t}\right\|_\gamma\leq
||u||_\gamma
\]
for all $u \in \mK \obp \mK$ and $N \in \N$. In particular,
\[
  |h_N(u)|\leq |f||||u||_\gamma
  \]
  for all $u \in \mK\obp \mK$ and $N \in \N$.  Thus, $||h_N||\leq
  ||f||$ for each natural number $N$.

  On the other hand,
  \[
  \psi(h_N)=\sum_{t=1}^N\psi(g_t)=\sum_{t=1}^N|\psi(f_{
    e_{n_tn_t}\otimes e_{n_tn_t}})| \text{ for all } N \in \N.
  \]
  Thus, $\{\psi(h_N): N \in \N\}$ is an unbounded sequence. But this
  is absurd because
  \[
  |\psi(h_N)|\leq \|\psi \|\|h_N\| \leq \|\psi\| \|f\|
  \]
  for all $N \in \N$. Hence, our assumption was wrong and we must have $\lim_{n\to
    \infty}\psi(f_{ e_{nn}\otimes e_{nn}})=0$.

  Since $\psi$ and $f$ were arbitrary, it follows that
  $\mathcal{A}^r_{(A^{**},\Box)}\left(\{\phi_{\mathcal
    U}\}\right)=A^{**}.$
	\end{proof}

It is worth noting that $\mathcal K\obp\mathcal K$ does not posses
annihilating elements. However, we can deduce from \Cref{ann-diamond}
and \Cref{ann-box} that its bidual behaves differently with respect to
both Arens products.

\begin{cor}	$(\mathcal K\obp\mathcal K)^{**}$ is not an annihilator
  Banach algebra with respect to any of the two Arens products.
\end{cor}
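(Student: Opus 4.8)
The plan is to read off from \Cref{ann-diamond} and \Cref{ann-box} a single nonzero functional that annihilates the entire bidual on one side, and then to observe that the existence of such a functional directly contradicts one of the defining conditions of an annihilator Banach algebra. Since only one of those conditions has to fail, the argument is short, and in fact the corollary is little more than a reformulation of the two theorems just proved.

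First I would fix a non-principal ultrafilter $\mathcal{U}$ on $\N$ and recall from \Cref{phi-U}(1) that the functional $\phi_{\mathcal{U}} \in A^{**}$ is nonzero; indeed $\phi_{\mathcal{U}}(\eta) = 1$ for the functional $\eta$ exhibited there. Next I would carefully unwind the two annihilator statements. By definition, $\mathcal{A}^r_{(A^{**},\diamond)}(\{\phi_{\mathcal{U}}\})$ is the set of $x \in A^{**}$ with $\phi_{\mathcal{U}} \diamond x = 0$; hence \Cref{ann-diamond} says precisely that $\phi_{\mathcal{U}} \diamond \psi = 0$ for every $\psi \in A^{**}$. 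Reading this through the definition of the left annihilator, it says exactly that $\phi_{\mathcal{U}} \in \mathcal{A}^l_{(A^{**},\diamond)}(A^{**})$. As $\phi_{\mathcal{U}} \neq 0$, we obtain $\mathcal{A}^l_{(A^{**},\diamond)}(A^{**}) \neq (0)$.

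I would then run the identical argument with $\Box$ in place of $\diamond$: \Cref{ann-box} gives $\phi_{\mathcal{U}} \Box \psi = 0$ for all $\psi \in A^{**}$, so $\phi_{\mathcal{U}}$ is a nonzero element of $\mathcal{A}^l_{(A^{**},\Box)}(A^{**})$, whence $\mathcal{A}^l_{(A^{**},\Box)}(A^{**}) \neq (0)$ as well. Finally I would invoke the definition of an annihilator Banach algebra, one requirement of which is that the left annihilator $\mathcal{A}^l_B(B)$ of the whole algebra equal $(0)$. This requirement fails for both $(A^{**}, \diamond)$ and $(A^{**}, \Box)$, so neither is an annihilator Banach algebra, which is the assertion.

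I do not anticipate any genuine obstacle here. The only point demanding care is the bookkeeping between left and right annihilators: the theorems are phrased as the \emph{right} annihilator of the singleton $\{\phi_{\mathcal{U}}\}$ being all of $A^{**}$, and one must translate this correctly into the statement that $\phi_{\mathcal{U}}$ is a nonzero \emph{left} annihilator of the full algebra $A^{**}$, which is the form needed to contradict the annihilator-algebra axiom $\mathcal{A}^l_B(B) = (0)$.
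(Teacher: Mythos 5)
Your proposal is correct and takes essentially the same route as the paper, which states the corollary as an immediate deduction from \Cref{ann-diamond} and \Cref{ann-box}: the nonzero functional $\phi_{\mathcal U}$ (nonzero since $\phi_{\mathcal U}(\eta)=1$ by \Cref{phi-U}) is a left annihilator of all of $A^{**}$ under both $\diamond$ and $\Box$, so the axiom $\mathcal{A}^l_B(B)=(0)$ fails for both Arens products. Your explicit translation from ``$\mathcal{A}^r_{(A^{**},\cdot)}(\{\phi_{\mathcal U}\})=A^{**}$'' to ``$\phi_{\mathcal U}\in\mathcal{A}^l_{(A^{**},\cdot)}(A^{**})$'' is exactly the bookkeeping the paper leaves implicit.
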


\begin{rem}
	From the preceding discussions, we observe that
        \[
        \mathcal{A}^r_{(A^{**},\diamond)}(\{\phi_{\mathcal
          U}\})=A^{**} =
        \mathcal{A}^r_{(A^{**},\Box)}(\{\phi_{\mathcal U}\}).
        \]
        This is not very common to see in usual Banach algebras. For
        example, $C^*$-algebras, group algebras $L^1(G)$, Schatten
        spaces $S_p(\mH)$ and many other standard Banach algebras do
        not possess elements which can annihilate the whole algebra.
\end{rem}

\begin{thm} \label{JA-plus-phi-U}
The Banach algebra $A:=\mathcal K\obp\mathcal K$ is neither Arens regular nor 
strongly Arens irregular.
  \end{thm}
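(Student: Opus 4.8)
The theorem packages two separate assertions. The failure of Arens regularity is not new: it was established in \cite{Lav}, where $S_2(\ell^2)\obp S_2(\ell^2)$ is shown not to be Arens regular, so the plan for this half is simply to invoke that result. All the genuinely new content concerns the failure of strong Arens irregularity, and here the functionals $\phi_{\mathcal U}$ manufactured in \Cref{phi-U} are precisely the tool needed. The strategy is to show that each $\phi_{\mathcal U}$ sits in the left topological center of $A^{**}$ while lying outside $J(A)$, so that $Z_t^{(l)}(A^{**})$ strictly contains $A$.

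To carry this out, I would fix a non-principal ultrafilter $\mathcal U$ on $\N$ together with the associated $\phi_{\mathcal U}\in A^{**}$. By \Cref{ann-diamond} and \Cref{ann-box},
\[
\phi_{\mathcal U}\diamond\psi = 0 = \phi_{\mathcal U}\Box\psi \qquad \text{for every } \psi\in A^{**}.
\]
In particular the two Arens products of $\phi_{\mathcal U}$ with an arbitrary $\psi$ coincide, which is exactly the defining condition for membership in the left topological center; hence $\phi_{\mathcal U}\in Z_t^{(l)}(A^{**})$. On the other hand, \Cref{phi-U}(1) guarantees that $\phi_{\mathcal U}\in A^{**}\setminus J(A)$. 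Combining the two, $J(A)\subsetneq Z_t^{(l)}(A^{**})$, so $A$ is not left strongly Arens irregular, and hence not strongly Arens irregular.

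Since $Z_t^{(l)}(A^{**})$ is a linear subspace of $A^{**}$ containing $J(A)$, the same argument in fact deposits the whole family $J(A)+\C\,\phi_{\mathcal U}$ (indeed $J(A)+\mathrm{span}\{\phi_{\mathcal U}:\mathcal U \text{ non-principal on }\N\}$) inside the topological center, realizing the class of functionals promised in the abstract. There is essentially no remaining obstacle at this stage: all the analytic difficulty has been absorbed into the two annihilator theorems, whose proofs rest on the uniqueness of limits along $\mathcal U$ and on the norm estimate coming from \Cref{Th:2.1}. The only point demanding a moment's care is to match conventions correctly, namely that the annihilator computations place $\phi_{\mathcal U}$ on the \emph{left} of both products, so that they feed into the \emph{left} topological center $Z_t^{(l)}(A^{**})$, which is all that is required to defeat strong Arens irregularity.
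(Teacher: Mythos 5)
Your proposal is correct and follows essentially the same route as the paper: non-regularity is quoted from \cite{Lav}, and strong irregularity fails because \Cref{ann-diamond} and \Cref{ann-box} give $\phi_{\mathcal U}\diamond\psi=0=\phi_{\mathcal U}\Box\psi$ for all $\psi\in A^{**}$, placing $\phi_{\mathcal U}$ in $Z_t^{(l)}(A^{**})$, while \Cref{phi-U}(1) ensures $\phi_{\mathcal U}\notin J(A)$. The paper phrases this by verifying $(\phi_{\mathcal U}+J_a)\diamond\psi=(\phi_{\mathcal U}+J_a)\Box\psi$ for the coset $J(A)+\phi_{\mathcal U}$, which is the same computation you perform, and your observation about linearity of the topological center is a harmless (correct) strengthening.
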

\begin{proof}
  That $A$ is not Arens regular is known from \cite{Lav}. To exhibit the failure of strong irregularity, we  assert that
  \[
J(A)+\{\phi_{\mathcal U} : \mathcal U \text{ is a non-principal
  ultrafilter on }\mathbb N \}\subseteq Z_t^{(l)}(A^{**}).
\]
For any non-principal ultrafilter $\mathcal{U}$ on $\N$, it follows from \Cref{ann-diamond} and \Cref{ann-box} that
  \[
  (\phi_{\mathcal U}+J_a)\diamond \psi =\phi_{\mathcal
    U}\diamond\psi+J_a\diamond \psi = 0 + J_a\Box \psi
  =(\phi_{\mathcal U}+J_a)\Box \psi
  \] for all $a \in A$ and $\psi\in A^{**}$.
  Since non-principal ultrafilters exist on every infinite set, it
  follows that the left topological center of $A$ contains more than
  $J(A)$; so, $A$ is not left strongly Arens irregular. In particular,
  $A$ is not strongly Arens irregular.
\end{proof}

\begin{rem}
Computations as in \Cref{phi-U} and \Cref{ann-box} yield
\[
(\psi\diamond\phi_{\mathcal U})(f)= \lim_{\mathcal U}\psi\left(~_{
  e_{nn}\otimes e_{nn}}f\right) ~~\text {and} \]
$$(\psi\Box\phi_{\mathcal
  U})(f)=\psi\left(\overset{w^*}{\lim_{\mathcal U}}~_{e_{nn}\otimes
  e_{nn}}f\right)
 $$ for all $\psi \in A^{**}$ and $f \in A^*$. Now, applying
symmetrical arguments as in \Cref{ann-box} and \Cref{ann-diamond}, we
can easily conclude that $\phi_{\mathcal U}$ is in the left
annihilator ideal of $A$ and hence in the right topological
center. Hence, $A$ is not right strongly Arens irregular.
\end{rem}

\subsubsection{The predual of $B(\ell^2)$} 
There is a natural Banach algebra
structure on the dual space $\mathcal K^*$, where the multiplication
is given by
\[
\la \cdot , T \ra \, \la \cdot , S \ra: = \la\cdot , T \circ S
\ra\text { for } T, S \in \mK.
\]
Recall that, upto isometric isomorphism, the
predual of the von Neumann algebra $B(\mH)$ is given by the space
$S_1(\mH)$ consisting of the trace-class operators on $\mH$; and, it
is known that $S_1(\mH)$ is an Arens regular Banach algebra with
respect to the multiplication given by the usual composition of
operators.

 On the other hand, there exists a natural isometric identification
 between $S_1(\mH)$ and the projective tensor product $\mH^* \obp
 \mH$. Since $\mH \cong \mK$ as Hilbert spaces, from the fact that
 $\mK$ is a Banach algebra with respect to the operator composition
 and the Schatten $2$-norm (\Cref{schatten-facts}), it follows that
 $S_1(\mH)$ inherits a canonical Banach algebra structure from that of
 $\mK^* \obp \mK$. Using the isometric isomorphism of $\mathcal
 K^*\obp\mathcal K$ with $\mK\obp\mK$, one can conclude that the
 pre-dual of $B(\mathcal H)$ is neither Arens regular nor strongly
 irregular with respect to this new induced multiplication.

\begin{rem}\label{predual}
  \begin{enumerate}
\item  The preceding theorem gives a Banach algebra structure on the
  predual $S_1(\ell^2)$ of $B(\ell^2)$, which is neither Arens regular
  nor strongly Arens irregular. This is in contrast to the fact that
  $S_1(\ell^2)$ with usual operator compostion is Arens regular.
\item However, such structures on $S_1(\ell^2)$ are already known to
  the experts and are not unique. An anonymous expert brought the
  following example to our notice:

  We know that $\ell^1$ sits as
a complemented subspace (known as tensor diagonal) in
$X:=\ell^2\obp\ell^2$. Thus, $X=\ell^1\oplus Y$ for some closed space
$Y$. We now define a product $\bullet$ on $X$ as $(a+y)\bullet
(b+z)=a\star b$ for $a,b\in \ell^1$ and $y,z\in Y$, where $\star$ is
the convolution on the semigroup Banach agebra $\ell^1=
\ell^1(\N)$. Clearly, $\bullet$ is well defined and $X$ is a
commutative Banach algebra with respect to this new product.

Note that, $X$ is not Arens regular, because $\ell^1$ is a non-regular
subalgebra of $X$. And, since $Y^{**}\hookrightarrow X^{**}$ and $X
\bullet Y = \{0\}$, we observe that $X$ fails to be strongly Arens
irregular as well.
\end{enumerate}
  \end{rem}

\subsection*{Some questions}
We conclude by listing a few unresolved natural questions:
\begin{enumerate}
\item Is a complete characterization of the topological centers of
  $\mK \obp \mK$ (or $\mK^*\obp\mK$) possible?
\item Is $\mathcal K^* \obp \mathcal K$ (equivalently, $\mK \obp \mK$)
  extremely non-Arens regular?
  \item In \cite{Lav}, it was shown that $S_p(\ell^2) \obp S_q(\ell^2)$
    is not Arens regular for all $1 \leq p, q \leq 2$. So, it still
    remains to answer whether $S_p(\ell^2) \obp S_q(\ell^2)$ is
    strongly Arens irregular for each pair $1 \leq p, q \leq 2$
    with $(p,q) \neq (2,2)$.
\item Does the predual of an arbitrary von Neumann algebra admit any
  natural Banach algebra structure? If yes, then what can be said about its
  Arens regularity?
\end{enumerate}

\noindent{\bf Acknowledgements.} The authors would like to the thank
the anonymous referee for suggesting some simplifications of some of
the proofs.

\end{document}